\newtheorem{theorem}{Theorem}[section]
\newtheorem{lemma}[theorem]{Lemma}
\theoremstyle{definition}
\newtheorem{proposition}[theorem]{Proposition}
\newtheorem{corollary}[theorem]{Corollary}
\newtheorem{remark}[theorem]{Remark}
\newtheorem{conjecture}[theorem]{Conjecture}
\theoremstyle{remark}
\newcommand{\be}{\begin{equation}}
\newcommand{\ee}{\end{equation}}
\numberwithin{equation}{section}
\begin{document}

\title{Some Remarks on Circle Action on Manifolds}

%    Information for first author
\author{Ping Li}
%    Address of record for the research reported here
\address{Department of Mathematics, Tongji University, Shanghai 200092, China}
%    Current address
%\curraddr{Department of Mathematics and Statistics, Case Western
%Reserve University, Cleveland, Ohio 43403}
\email{pingli@tongji.edu.cn}
%    \thanks will become a 1st page footnote.
\thanks{The first author is supported by Program for Young Excellent
Talents in Tongji University.}

%    Information for second author
\author{Kefeng Liu} \address{Department of Mathematics, University of California at Los Angeles, Los Angeles, CA 90095, USA and Center of Mathematical Science,
Zhejiang University, 310027, China} \email{liu@math.ucla.edu}
%\thanks{Support information for the second author.}

%    General info
\subjclass[2000]{58C30, 57R75, 53C24.}

%\date{January 1, 2001 and, in revised form, June 22, 2001.}

%\dedicatory{This paper is dedicated to our advisors.}
\keywords{circle action, semi-free circle action, fixed point,
topological obstruction. }

\begin{abstract}
This paper contains several results concerning circle action on
almost-complex and smooth manifolds. More precisely, we show that,
for an almost-complex manifold $M^{2mn}$(resp. a smooth manifold
$N^{4mn}$), if there exists a partition
$\lambda=(\lambda_{1},\cdots,\lambda_{u})$ of weight $m$ such that
the Chern number $(c_{\lambda_{1}}\cdots c_{\lambda_{u}})^{n}[M]$
(resp. Pontrjagin number $(p_{\lambda_{1}}\cdots
p_{\lambda_{u}})^{n}[N]$) is nonzero, then \emph{any} circle action
on $M^{2mn}$ (resp. $N^{4mn}$) has at least $n+1$ fixed points. When
an even-dimensional smooth manifold $N^{2n}$ admits a semi-free
action with isolated fixed points, we show that $N^{2n}$ bounds,
which generalizes a well-known fact in the free case. We also
provide a topological obstruction, in terms of the first Chern
class, to the existence of semi-free circle action with
\emph{nonempty} isolated fixed points on almost-complex manifolds.
The main ingredients of our proofs are Bott's residue formula and
rigidity theorem.
\end{abstract}

\maketitle

\section{Introduction and main results}
Unless otherwise stated, all the manifolds (smooth or
almost-complex) mentioned in the paper are closed, connected and
oriented. For almost-complex manifolds, we take the canonical
orientations induced from the almost-complex structures. We denote
by superscripts the corresponding \emph{real} dimensions of such
manifolds. When $M$ is a smooth (resp. almost-complex) manifold, we
say $M$ has an $S^{1}$-action if $M$ admits a circle action which
preserves the smooth (resp. almost-complex) structure.

Given a manifold $M$ and an $S^{1}$-action, the study of the fixed
point set $M^{S^{1}}$ is an important topic in geometry and
topology. In (\cite{Ko}, p.338), Kosniowski proposed the following
conjecture, which relates the number of fixed points to the
dimension of the manifold.

\begin{conjecture}[Kosniowski]Suppose that $M^{2n}$ is a unitary
$S^{1}$-manifold with isolated fixed points. If $M$ is not a
boundary then this action has at least $[\frac{n}{2}]+1$ fixed
points.
\end{conjecture}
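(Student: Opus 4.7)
The plan is to attack the conjecture with the two tools the abstract advertises: Bott's residue formula and the $S^{1}$-equivariant rigidity theorem. Write the isolated fixed points as $p_{1},\dots,p_{k}$ and let $w_{i,1},\dots,w_{i,n}$ be the nonzero integer tangential weights at $p_{i}$. Because $M$ is not a boundary, Milnor's description of the unitary cobordism ring furnishes a partition $\lambda=(\lambda_{1},\dots,\lambda_{u})$ of $n$ for which $c_{\lambda}[M]:=(c_{\lambda_{1}}\cdots c_{\lambda_{u}})[M]\neq 0$. The strategy is then to assume $k\le [\frac{n}{2}]$ and derive a contradiction by showing every such $c_{\lambda}[M]$ must vanish.

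The first step is to apply Bott's residue formula, which converts each Chern number into a local sum
\[ c_{\lambda}[M]=\sum_{i=1}^{k}\frac{P_{\lambda}(w_{i,1},\dots,w_{i,n})}{\prod_{j=1}^{n}w_{i,j}}, \]
with $P_{\lambda}$ the symmetric polynomial representing $c_{\lambda}$. Specializing $\lambda=(1^{n})$ already identifies $k$ with the Euler characteristic of $M$. These formulas reduce the topological question to an arithmetic problem about the tangential weights.

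The second step is to bring in enough additional equations to pin those weights down. Rigidity of the equivariant $\chi_{y}$-genus gives the $t$-independent identity
\[ \sum_{i=1}^{k}\frac{\prod_{j=1}^{n}(y\,e^{w_{i,j}t}-1)}{\prod_{j=1}^{n}(1-e^{-w_{i,j}t})}\equiv\chi_{y}(M); \]
expanding in $t$ and $y$ and matching coefficients produces infinitely many polynomial relations among the weights, linking the local data at distinct fixed points. Combined with the Atiyah--Hirzebruch integrality of the twisted Dolbeault indices, and if needed the stronger rigidity of the Ochanine elliptic genus, one obtains a rigid system that sharply limits the admissible weight configurations when $k$ is small.

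The final and hardest step is a purely combinatorial one: to prove that no admissible configuration with $k\le [\frac{n}{2}]$ can make any sum in Step~1 nonzero, contradicting the choice of $\lambda$. This is the combinatorial heart of Kosniowski's conjecture and is where I expect the real obstacle to lie, since rigidity alone supplies only finitely many independent identities per degree while the weight data involves $nk$ free parameters. A plausible line of attack is induction on $n$: use a Morse-type decomposition of $M$ along the $S^{1}$-action to isolate the fixed point carrying a weight of minimal absolute value, excise its local contribution via equivariant surgery, and reduce to a lower-dimensional $S^{1}$-manifold to which one applies the inductive hypothesis. Making such a reduction live inside the unitary cobordism category, so that the non-bounding hypothesis is preserved, is exactly the subtle point that I expect to block an elementary proof.
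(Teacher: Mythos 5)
This statement is Kosniowski's conjecture, which the paper quotes precisely as an \emph{open conjecture}: nowhere does the paper prove it, and its actual results (Theorem \ref{maintheorem1} and its corollary) are deliberately weaker statements with stronger hypotheses. So there is no proof in the paper to match yours against, and your proposal is not a proof either --- it is a programme whose decisive step you yourself flag as unresolved. The admitted gap is real and is exactly where the difficulty of the conjecture sits: after Bott's residue formula converts each Chern number into a sum of local contributions, and after rigidity of $\chi_y$ (or an elliptic genus) adds finitely many identities per degree, nothing forces the vanishing of every $c_{\lambda}[M]$ when the number of fixed points is at most $[\frac{n}{2}]$. Note why the analogous mechanism does work in the paper's Theorem \ref{maintheorem1}: there the hypothesis is that a Chern number of the very special form $(c_{\lambda_{1}}\cdots c_{\lambda_{u}})^{n}[M]\neq 0$ on a $2mn$-dimensional manifold, so one can plug the powers $f_i=\big(\sum x_{j_1}\cdots x_{j_{\lambda_t}}\big)^{i}$, $i=0,\dots,r-1$, into the residue formula --- these have degree $mi<mn$ when $r\le n$, yielding a Vandermonde system that kills all the local sums. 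If you only know that \emph{some} Chern number $c_{\lambda}[M]$ with $\lambda$ of full weight $n$ is nonzero (which is all that non-bounding in $\Omega^{U}_{*}$ gives you via Milnor--Novikov), the powers of the corresponding polynomial immediately exceed the critical degree and no such linear system exists; your Steps 1--2 therefore do not constrain the weights enough, and Step 3 is pure hope, with the proposed ``equivariant surgery plus induction'' not shown to stay inside unitary cobordism or to preserve non-bounding.

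Two smaller inaccuracies: for a \emph{unitary} (weakly almost-complex) manifold the top Chern number $c_{n}[M]$ of the stable bundle need not equal the Euler characteristic, so specializing $\lambda=(1^n)$ does not by itself identify $k$ with $\chi(M)$; and the rigidity statement you invoke should be formulated (as in the paper's Proposition \ref{rigidity proposition}) via the Atiyah--Bott--Segal--Singer fixed point formula for the relevant twisted Dolbeault-type operator, with care that it applies to almost-complex, not just complex, manifolds. In summary: the approach reproduces the paper's toolkit but cannot close the argument, and the statement remains a conjecture --- which is precisely how the paper treats it.
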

\begin{remark}A \emph{weakly almost-complex structure} on a manifold $M^{2n}$ is determined by a complex structure in the vector bundle $\tau(M^{2n})\oplus\mathbb{R}^{2k}$ for some $k$, where
$\tau(M^{2n})$ is the tangent bundle of $M^{2n}$ and
$\mathbb{R}^{2k}$ denotes a trivial real $2k$-dimensional vector
bundle over $M^{2n}$. A unitary $S^{1}$-manifold means that $M^{2n}$
has a weakly almost-complex structure and $S^{1}$ acts on $M^{2n}$
preserving this structure.
\end{remark}

Recently, Pelayo and Tolman showed that (\cite{PT}, Theorem 1), if a
symplectic manifold $(M^{2n}, \omega)$ has a symplectic
$S^{1}$-action and the weights induced from the isotropy
representations on the fixed points satisfy some subtle condition,
then this action has at least $n+1$ fixed points.

\begin{remark}If a
symplectic manifold $(M^{2n}, \omega)$ has an \emph{Hamiltonian}
$S^{1}$-action, then the fact that this action must have at least
$n+1$ fixed points is quite well-known. The reason is that the fixed
points are exactly the critical points of the corresponding momentum
map (a perfect Morse-Bott function) and the even-dimensional Betti
numbers of $M$ are all positive. The conclusion then follows from
the Morse inequality. This reason has been explained in details in
the Introduction of \cite{PT}.
\end{remark}

We recall that a \emph{partition} is a finite sequence
$\lambda=(\lambda_{1},\cdots,\lambda_{u})$ of unordered positive
integers. We call $\sum_{i=1}^{u}\lambda_{i}$ the \emph{weight} of
this partition $\lambda$.

Inspired by the techniques from \cite{PT}, we will show the
following theorem in Section 3, which is our first main result.

\begin{theorem}\label{maintheorem1}
\begin{enumerate}
\item
Suppose $M^{2mn}$ is an almost-complex manifold. If there exists a
partition $\lambda=(\lambda_{1},\cdots,\lambda_{u})$ of weight $m$
such that the corresponding Chern number $(c_{\lambda_{1}}\cdots
c_{\lambda_{u}})^{n}[M]$ is nonzero, then \emph{any} $S^{1}$-action
on $M$ must have at least $n+1$ fixed points.

\item
Suppose $N^{4mn}$ is a smooth manifold. If there exists a partition
$\lambda=(\lambda_{1},\cdots,\lambda_{u})$ of weight $m$ such that
the corresponding Pontrjagin number $(p_{\lambda_{1}}\cdots
p_{\lambda_{u}})^{n}[M]$ is nonzero, then \emph{any} $S^{1}$-action
on $N$ must have at least $n+1$ fixed points.

\end{enumerate}

\end{theorem}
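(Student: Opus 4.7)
The plan is to adapt the Pelayo--Tolman style use of Bott's residue formula, extracting a Vandermonde-type combinatorial identity from several degrees of localization data simultaneously.

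We may assume the fixed-point set is isolated: if it has a positive-dimensional component there are already infinitely many fixed points and the assertion is trivial. Let $p_1,\dots,p_k$ be the fixed points, and let $w_1(p_i),\dots,w_{mn}(p_i)\in\mathbb{Z}\setminus\{0\}$ denote the weights of the isotropy representation on $T_{p_i}M$. Writing $\sigma_s$ for the $s$-th elementary symmetric polynomial, set
$$f_i := \sigma_{\lambda_1}\bigl(w(p_i)\bigr)\cdots\sigma_{\lambda_u}\bigl(w(p_i)\bigr), \qquad E_i := \prod_{j=1}^{mn}w_j(p_i),$$
so that under equivariant localization $f_i$ is (up to a power of the equivariant parameter) the restriction of $c_{\lambda_1}\cdots c_{\lambda_u}$ to $p_i$, and $E_i$ is the restriction of the top equivariant Chern class of $T_{p_i}M$.

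The heart of the argument is to apply Bott's residue formula to the equivariant lift of $(c_{\lambda_1}\cdots c_{\lambda_u})^{j}$ for each $j=0,1,\dots,n$. Since this class has real degree $2mj$, its integral over $M$ vanishes for $j<n$ and equals the prescribed nonzero Chern number for $j=n$. Localization therefore yields
$$\sum_{i=1}^{k}\frac{f_i^{\,j}}{E_i}=0 \quad(0\le j\le n-1),\qquad \sum_{i=1}^{k}\frac{f_i^{\,n}}{E_i}\neq 0.$$
Group the $p_i$ by the distinct values $c_1,\dots,c_r$ taken by $f_i$ and set $A_s:=\sum_{i:\,f_i=c_s}1/E_i$; the first $n$ identities become the homogeneous Vandermonde system $\sum_{s}A_s c_s^{j}=0$ for $0\le j\le n-1$. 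If $r\le n$, the Vandermonde matrix has full rank $r$, forcing $A_s=0$ for all $s$; but then $\sum_i f_i^n/E_i=\sum_s A_s c_s^n=0$ as well, contradicting the hypothesis. Hence $r\ge n+1$, so $k\ge r\ge n+1$.

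Part~(2) follows by exactly the same scheme with the Pontrjagin analogue of Bott's formula: at an isolated fixed point on $N^{4mn}$ the real tangent representation splits into $2mn$ two-dimensional weight spaces with integer weights $w_1,\dots,w_{2mn}$, the localization of $p_s$ is controlled by $\sigma_s(w_1^2,\dots,w_{2mn}^2)$, and the equivariant Euler class contributes $\prod_j w_j$; substituting $f_i:=\sigma_{\lambda_1}(w(p_i)^2)\cdots\sigma_{\lambda_u}(w(p_i)^2)$ and $E_i:=\prod_j w_j(p_i)$ the argument goes through verbatim. The main obstacle I anticipate is the careful bookkeeping of the equivariant parameter so that the vanishing residues for $j<n$ are genuinely a consequence of dimensional reasons; once those identities are in place, the Vandermonde step is the clean combinatorial closure of the proof.
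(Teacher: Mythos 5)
Your proposal is correct and follows essentially the same route as the paper: apply Bott's residue formula to the powers $(c_{\lambda_1}\cdots c_{\lambda_u})^j$, use the degree count $2mj<2mn$ to get vanishing for $j<n$, and close with the nonsingular Vandermonde system on the grouped localization sums (the paper phrases this as Lemmas \ref{main1lemma1} and \ref{main1lemma2}, assuming at most $n$ fixed points and using exponents $0,\dots,r-1$, which is a trivial reorganization of your argument). The Pontrjagin case is likewise handled in the paper exactly as you indicate, by the smooth version of the residue formula.
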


\begin{corollary}
\begin{enumerate}
\item If the Chern number $c_{m}^{n}[M]$ is nonzero, then for any $S^{1}$-action on almost-complex manifold $M^{2mn},$ it has at least $n+1$ fixed points.
In particular, if $c_{1}^{n}[M]$ is nonzero, then any $S^{1}$-action
on almost-complex manifold $M^{2n}$ must have at least $n+1$ fixed
points.

\item If the Pontrjagin number $p_{m}^{n}[N]$ is nonzero, then for any $S^{1}$-action on
smooth manifold $N^{4mn}$, it has at least $n+1$ fixed points. In
particular, if $p_{1}^{n}[N]$ is nonzero, then any $S^{1}$-action on
$N^{4n}$ must have at least $n+1$ fixed points.
\end{enumerate}
\end{corollary}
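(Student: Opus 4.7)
The corollary is essentially a direct specialization of Theorem~\ref{maintheorem1}. The plan is to observe that the single-term sequence $\lambda=(m)$ qualifies as a partition of weight $m$ in the sense of the definition given just before Theorem~\ref{maintheorem1}, since its unique entry is the positive integer $m$ and $\sum_{i=1}^{1}\lambda_{i}=m$. For this choice of $\lambda$ one has $u=1$ and $\lambda_{1}=m$, so that
\[
(c_{\lambda_{1}}\cdots c_{\lambda_{u}})^{n}[M] \;=\; c_{m}^{\,n}[M].
\]
Hence the hypothesis $c_{m}^{n}[M]\neq 0$ is exactly the hypothesis of part~(1) of Theorem~\ref{maintheorem1} for this partition, and the theorem immediately yields that any $S^{1}$-action on $M^{2mn}$ has at least $n+1$ fixed points. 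This proves the first assertion of part~(1) of the corollary.

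The ``in particular'' statement in~(1) is then the further specialization $m=1$: on $M^{2n}=M^{2\cdot 1\cdot n}$ the partition $\lambda=(1)$ has weight $1$, the associated Chern number is $c_{1}^{n}[M]$, and Theorem~\ref{maintheorem1}(1) applied with this partition gives the conclusion. Part~(2) of the corollary is obtained by the identical argument, applying part~(2) of Theorem~\ref{maintheorem1} to the singleton partition $\lambda=(m)$ with Pontrjagin classes in place of Chern classes; the specialization $m=1$ gives the ``in particular'' statement for smooth manifolds $N^{4n}$.

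Since the substantive work is already carried out in Theorem~\ref{maintheorem1}, no genuine obstacle arises in deducing the corollary. The only verification needed is the elementary observation that a one-term sequence is admitted as a partition, which is immediate from the definition; consequently the proof is a one-line specialization, and no Bott-residue or rigidity argument is repeated here.
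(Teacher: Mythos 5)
Your proposal is correct and matches the paper's intent exactly: the corollary is stated without a separate proof precisely because it is the specialization of Theorem~\ref{maintheorem1} to the one-term partition $\lambda=(m)$ (and then $m=1$ for the ``in particular'' clauses), which is what you do. Nothing further is needed.
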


It is a well-known fact that, if a smooth manifold $N^{n}$ has a
free $S^{1}$-action, then $N^{n}$ bounds, i.e., $N^{n}$ can be
realized as the oriented boundary of some smooth, oriented,
$(n+1)$-dimensional manifold with boundary. Using the language of
cobordism theory, $[N^{n}]=0\in \Omega_{\ast}^{SO}$, where
$\Omega_{\ast}^{SO}$ is the oriented cobordism ring. In particular,
all the Pontrjagin numbers and Stiefel-Whitney numbers vanish. This
well-known fact is not difficult to prove: $N^{n}$ is the total
space of the principal $S^{1}$-bundle over the quotient manifold
$N^{n}/S^{1}$, of which the structure group is $S^{1}=SO(2)$. Then
we can extend the action of $SO(2)$ to the $2$-disk $D^{2}$ to get
the associated $D^{2}$-bundle $N^{n}\times_{SO(2)}D^{2}$, of which
the boundary is exactly $N^{n}$.

We recall that a circle action is called \emph{semi-free} if it is
free outside the fixed point set or equivalently, the isotropic
subgroup of any point is either trivial or the whole circle. In
(\cite{PW}, Theorem 1.1), the authors showed that the Pontrjagin
numbers of manifolds admitting a semi-free action with isolated
fixed points are all zero. Our following result is a generalization
of both the well-known fact mentioned above and (\cite{PW}, Theorem
1.1).
\begin{theorem}\label{maintheorem2}
If an even-dimensional smooth manifold $N^{2n}$ admits a semi-free
$S^{1}$-action with isolated fixed points, then $N^{2n}$ bounds. In
particular,  all the Pontrjagin numbers and Stiefel-Whitney numbers
vanish.
\end{theorem}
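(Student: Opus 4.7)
The plan is to exhibit $N^{2n}$ as an oriented boundary by combining an explicit disk-bundle construction with the rigidity of the signature. Let $F=\{p_1,\dots,p_k\}$ be the fixed set. Semi-freeness with isolated fixed points forces each tangential representation at $p_i$ on $T_{p_i}N\cong\mathbb{C}^n$ to have weights $(\epsilon_{i,1},\dots,\epsilon_{i,n})\in\{\pm 1\}^n$; let $b_i$ be the number of indices with $\epsilon_{i,j}=-1$. Fix invariant closed disk neighborhoods $D^{2n}_i\subset N$ of the $p_i$ and set $N_0:=N\setminus\bigsqcup_i\operatorname{int}(D^{2n}_i)$; this is a compact $2n$-manifold with free $S^1$-action and $\partial N_0=\bigsqcup_i S^{2n-1}_i$, so $N_0\to N_0/S^1$ is a principal $S^1$-bundle. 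I would form the associated disk bundle $E:=N_0\times_{S^1}D^2$, with weight-one $S^1$-action on $D^2$; this is a compact oriented $(2n+1)$-manifold with corners whose smoothed boundary is $\partial E=N_0\cup_{\partial N_0}\big(\partial N_0\times_{S^1}D^2\big)$.

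Next I would glue $E$ to the trivial cobordism $N\times[0,1]$ along the common copy of $N_0$ embedded in $\partial E$ and in $N\times\{0\}$. After smoothing corners this produces a compact $(2n+1)$-manifold $W$ with
\[
\partial W\;=\;N\;\sqcup\;\bigsqcup_{i=1}^k M_i,\qquad M_i:=(-D^{2n}_i)\cup_{S^{2n-1}_i}\big(S^{2n-1}_i\times_{S^1}D^2\big).
\]
The geometric input is that each $M_i$ is, up to orientation, diffeomorphic to $\mathbb{CP}^n$: composing with complex conjugation on the $b_i$ directions of weight $-1$ (an $S^1$-equivariant $\mathbb{R}$-linear isomorphism that alters orientation by $(-1)^{b_i}$), the bundle $S^{2n-1}_i\times_{S^1}D^2$ becomes the standard disk bundle $D(\mathcal{O}(-1))$ over $\mathbb{CP}^{n-1}$, while $D^{2n}\cup_{S^{2n-1}}D(\mathcal{O}(-1))$ recovers the standard decomposition of $\mathbb{CP}^n$. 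Sign-tracking yields $[M_i]=(-1)^{b_i+1}[\mathbb{CP}^n]$ in $\Omega_{2n}^{SO}$, and since $[\partial W]=0$ one obtains
\[
[N]\;=\;\Big(\sum_{i=1}^k(-1)^{b_i}\Big)\,[\mathbb{CP}^n]\in\Omega_{2n}^{SO}.
\]

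The final step is to show that this coefficient vanishes, via the Atiyah--Bott--Singer rigidity theorem and Bott's residue formula. Rigidity asserts that the equivariant signature $\operatorname{sig}(g,N)$ is a constant function of $g\in S^1$, equal to $\operatorname{sig}(N)$. Localization at the isolated fixed points in the semi-free case yields, using $(1+g^{-1})/(1-g^{-1})=-(1+g)/(1-g)$,
\[
\operatorname{sig}(g,N)\;=\;\sum_{i=1}^k\prod_{j=1}^n\frac{1+g^{\epsilon_{i,j}}}{1-g^{\epsilon_{i,j}}}\;=\;\Big(\tfrac{1+g}{1-g}\Big)^n\cdot\sum_{i=1}^k(-1)^{b_i}.
\]
Since $\big((1+g)/(1-g)\big)^n$ is nonconstant in $g$ while the left-hand side is constant, the coefficient $\sum_i(-1)^{b_i}$ must equal zero. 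Combined with the previous step this gives $[N]=0$ in $\Omega_{2n}^{SO}$, so $N^{2n}$ is an oriented boundary; the vanishing of all Pontrjagin and Stiefel-Whitney numbers is then immediate.

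The most delicate step will be the orientation bookkeeping in the identification $[M_i]=(-1)^{b_i+1}[\mathbb{CP}^n]$, which requires reconciling the orientation of $D^{2n}_i$ inherited from $N$ with the complex orientation produced by the chosen weight decomposition, together with the analogous identification of $S^{2n-1}_i\times_{S^1}D^2$ with $\pm D(\mathcal{O}(-1))$ over $\mathbb{CP}^{n-1}$. The rigidity-plus-Bott-residue input is exactly the toolkit flagged in the introduction of the paper.
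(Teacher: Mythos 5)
Your proposal is correct, and it reaches the theorem by a route that genuinely differs from the paper's in its second half. The cobordism you build by hand (remove invariant disk neighborhoods of the fixed points, attach the associated disk bundle $N_{0}\times_{S^{1}}D^{2}$ and the trivial cobordism, identify each cap $M_{i}$ with $\pm\mathbb{C}P^{n}$) is exactly the constructive content of the Kawakubo--Uchida lemma that the paper simply cites to get $[N^{2n}]=\sum_{P_{i}}[\mathbb{C}P^{n}\big|_{P_{i}}]\in\Omega_{\ast}^{SO}$; so here you are reproving a quoted ingredient rather than doing something new, and the orientation bookkeeping you flag only needs to be carried out up to one universal sign, since a sign error independent of $i$ cannot affect the conclusion --- what matters is only that the orientation of $M_{i}$ correlates with $(-1)^{b_{i}}$, which it does because conjugating the $b_{i}$ negative-weight coordinates reverses orientation by $(-1)^{b_{i}}$. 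Where you diverge is in forcing the cancellation: the paper proves all Pontrjagin numbers vanish via Bott's residue formula (Proposition \ref{vanish prop}), notes the number of fixed points is even, and then invokes the fact that $[\mathbb{C}P^{2q}]$ is not a torsion class (treating $n$ odd separately, where $[\mathbb{C}P^{n}]=0$), whereas you compute the signs in the localized cobordism class explicitly and kill the coefficient $\sum_{i}(-1)^{b_{i}}$ by rigidity of the equivariant signature; this avoids both the non-torsion argument and the parity-of-$n$ case split. Note that your quantity $\sum_{i}(-1)^{b_{i}}=\rho_{0}-\rho_{1}$ is precisely the paper's identity (\ref{even number formula}), which it obtains even more cheaply from Bott's residue formula with $f=1$ (degree $<n$), so you could substitute that one-line argument for the signature computation. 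Two small points to tidy: the Atiyah--Singer contribution of an isolated fixed point is $\prod_{j}\frac{g^{k_{j}}+1}{g^{k_{j}}-1}=(-1)^{n}\prod_{j}\frac{1+g^{k_{j}}}{1-g^{k_{j}}}$, so your localization formula is off by a harmless global factor $(-1)^{n}$; and the fixed-point formula a priori holds only for topological generators $g\in S^{1}$, which suffices by density, while the rigidity you invoke is elementary here because $S^{1}$ is connected and hence acts trivially on cohomology, so $\mathrm{sign}(g,N)=\mathrm{sign}(N)$ for all $g$.
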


\begin{remark}
In this theorem, the semi-free hypothesis is essential. For example,
we can look at the $S^{1}$-action on complex projective plane
$\mathbb{C}P^2$ given by
$$[z_{0}:z_{1}:z_{2}]\rightarrow [z_{0}:t\cdot z_{1}:t^{2}\cdot z_{2}]$$
for $t\in S^{1}$, where $[z_{0}:z_{1}:z_{2}]$ denotes the
homogeneous coordinates of $\mathbb{C}P^2$.

The fixed points of this action are $[1:0:0]$, $[0:1:0]$ and
$[0:0:1]$. The isotropy subgroup of the non-fixed point $[1:0:1]$ is
$\{1,-1\}=\mathbb{Z}_{2}$, which is nontrivial. Hence this action is
not semi-free.
\end{remark}
This result tells us that Pontrjagin numbers and Stiefel-Whitney
numbers are numerical obstructions to the existence of semi-free
actions with isolated fixed points on smooth manifolds. However, in
contrast to the smooth case, when an almost-complex manifold
$(M^{2n},J)$ has a semi-free $S^{1}$-action with \emph{nonempty}
isolated fixed points, the Chern numbers of $(M^{2n},J)$ don't
vanish (see Lemma \ref{c_{1}c_{n-1}}). One may ask, in the
almost-complex case, whether there still exist some topological
obstructions to the existence of semi-free $S^{1}$-actions. We know
that the first Chern class plays an important role in complex
(almost-complex) manifolds. The following result provides such an
obstruction to the existence of semi-free $S^{1}$-action on
almost-complex manifolds.
\begin{theorem}\label{maintheorem3}
Let $(M^{2n},J)$ be an almost-complex manifold admitting a semi-free
$S^{1}$-action with nonempty isolated fixed points. Then the first
Chern class $c_{1}(M)\in H^{2}(M;\mathbb{Z})$ is either primitive or
twice a primitive element.
\end{theorem}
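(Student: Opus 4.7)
The plan is to combine an equivariant-cohomology reading of the divisibility of $c_1(M)$ with Bott's residue formula applied to the Todd class, in order to extract a rigid symmetric pattern of local representation types at fixed points and then read off the divisibility bound on $c_1(M)$.

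First, I would work on the equivariant side. Because $M^{S^1}\neq\emptyset$, the Leray--Serre spectral sequence of $M\hookrightarrow M_{S^1}\to BS^1$ collapses, so the fiber-restriction map $H^2_{S^1}(M;\mathbb{Z})\to H^2(M;\mathbb{Z})$ is surjective with kernel $\mathbb{Z}u$, where $u\in H^2(BS^1;\mathbb{Z})$ is the standard generator. Writing $c_1(M)=k\beta$ with $\beta$ primitive and picking an equivariant lift $\tilde\beta\in H^2_{S^1}(M;\mathbb{Z})$, one has $c_1^{S^1}(TM)=k\tilde\beta+mu$ for a unique integer $m$. At each isolated fixed point $p$, semi-freeness forces all isotropy weights to be $\pm1$; writing $n_p^\pm$ for the numbers of $\pm 1$ weights, restriction to $p$ yields the crucial congruence
\[
n_p^+-n_p^-\equiv m\pmod{k}\qquad\text{for every fixed point } p.
\]

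The key step is to prove that each value $j\in\{0,1,\ldots,n\}$ is actually realized as $n_p^-$ for some fixed point. Let $\alpha_j$ denote the number of fixed points with $n_p^-=j$. Bott's residue formula applied to the Todd class of $M$ gives
\[
\chi^{S^1}(M,\mathcal{O})(t)=\sum_p\frac{(-1)^{n_p^-}\,t^{n_p^+}}{(t-1)^n},
\]
and since the left-hand side is a Laurent polynomial in $t$, the polynomial $P(t):=\sum_p(-1)^{n_p^-}t^{n_p^+}$ of degree at most $n$ must be divisible by $(t-1)^n$. Comparing leading coefficients shows $P(t)=\alpha_0(t-1)^n$, and extracting the coefficient of $t^{n-j}$ forces $\alpha_j=\binom{n}{j}\alpha_0$ for every $j=0,1,\ldots,n$. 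Since $M^{S^1}\neq\emptyset$, one has $\alpha_0\ge 1$, so in fact $\alpha_j\ge 1$ for each $j$.

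The conclusion is then immediate: the congruence $n-2j\equiv m\pmod{k}$ holds for every $j\in\{0,1,\ldots,n\}$, so comparing $j=0$ and $j=1$ yields $k\mid 2$. Hence $k\in\{1,2\}$ and $c_1(M)$ is either primitive or twice a primitive class. The main obstacle is the second step, namely extracting the precise binomial identity $\alpha_j=\binom{n}{j}\alpha_0$ from the equivariant Todd identity; everything else is then a short matter of comparing congruences mod $k$.
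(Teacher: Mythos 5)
Your route is genuinely different from the paper's. The paper first shows $c_{1}(M)\neq 0$ via the Chern number computation $c_{1}c_{n-1}[M]=\rho_{0}\cdot n\cdot 2^{n}$ (Lemma \ref{c_{1}c_{n-1}}), then writes $c_{1}=d\cdot x$ and invokes Hirzebruch's rigidity theorem at level $d$ for the twisted genus $\chi(M,L)$ with $L^{\otimes d}=K$ (Proposition \ref{rigidity proposition}), and finally evaluates the resulting identity using the Tolman--Weitsman count $\rho_{t}=\rho_{0}\binom{n}{t}$ to force $d=2$. You avoid the rigidity theorem altogether: your middle step reproves the Tolman--Weitsman lemma directly from the Atiyah--Bott--Segal--Singer fixed point formula for the Todd genus together with the fact that the equivariant index is a finite Laurent polynomial (this is sound: with all weights $\pm 1$ the local terms are $(-1)^{n_{p}^{-}}t^{n_{p}^{+}}/(t-1)^{n}$, and a degree count forces $P(t)=\alpha_{0}(t-1)^{n}$, hence $\alpha_{j}=\binom{n}{j}\alpha_{0}$), and you replace the rigidity input by the equivariant-cohomology congruence $n_{p}^{+}-n_{p}^{-}\equiv m \pmod{k}$. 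Comparing $j=0$ with $j=1$ then gives $k\mid 2$, which is a clean and more elementary endgame: you only use that $\chi(g;M,\mathcal{O})$ is a character, not that it is rigid.

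The one genuine weak point is the justification of your first step. A nonempty fixed point set does \emph{not} imply that the Serre spectral sequence of $M\hookrightarrow M_{S^{1}}\to BS^{1}$ collapses; equivariant formality is a much stronger property, and in particular the surjectivity of $H^{2}_{S^{1}}(M;\mathbb{Z})\to H^{2}(M;\mathbb{Z})$, which you use to lift the primitive class $\beta$ to $\tilde\beta$, is not available for free. What a fixed point does give (via the induced section of $M_{S^{1}}\to BS^{1}$, together with $H^{1}(BS^{1})=0$, so that the $E^{1,1}$ term vanishes) is exactly the kernel statement: in degree two the kernel of the restriction to the fiber is the image of $H^{2}(BS^{1})$, and this holds with arbitrary coefficients. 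That is all you actually need, provided you run the argument mod $k$: since $c_{1}(M)\equiv 0$ in $H^{2}(M;\mathbb{Z}/k)$, the class $c_{1}^{S^{1}}(TM)\bmod k$ lies in that kernel, hence equals $m\bar{u}$ for some $m\in\mathbb{Z}/k$, and restricting to each fixed point gives $n_{p}^{+}-n_{p}^{-}\equiv m\pmod{k}$ without ever lifting $\beta$. With this repair the rest of your argument goes through. Note also that your write-up silently assumes $c_{1}(M)\neq 0$ when writing $c_{1}=k\beta$ with $\beta$ primitive; this is the role played in the paper by Lemma \ref{c_{1}c_{n-1}}, and in your setting it follows from the same congruence argument applied to any $k\geq 3$ (the zero class is divisible by every $k$, which your conclusion $k\mid 2$ forbids).
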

\begin{remark}For almost-complex manifolds, in the semi-free case with isolated fixed points, the only known examples are
$(\mathbb{C}P^{1})^{n}$, equipped with the diagonal $S^{1} $-action.
Note that these examples are even Hamiltonian $S^{1}$-actions on
symplectic manifolds. In fact, Hattori showed that (\cite{Ha}), if a
symplectic manifold $(M^{2n},\omega)$ admits a Hamiltonian,
semi-free $S^{1}$-action with isolated fixed points, then the
cohomology ring and the Chern classes of $(M^{2n},\omega)$ are the
same as $(\mathbb{C}P^1)^{n}$. Recently, Gonzalez showed that
(\cite{Go1}) such $(M^{2n},\omega)$ has the same quantum cohomology
ring as $(\mathbb{C}P^1)^{n}$. Moreover, if $n=3$, Gonzalez showed
that (\cite{Go2}) $(M^{6},\omega)$ is equivariantly symplectomorphic
to $(\mathbb{C}P^1)^{3}$. While in the almost-complex case, much
less is known. It would be very interesting to find some more
topological obstructions.
\end{remark}
In Section 2, we will review the Bott's residue formula and prove a
rigidity proposition. Then in Section 3, the three subsections will
be devoted to the proofs of Theorems \ref{maintheorem1},
\ref{maintheorem2} and \ref{maintheorem3} respectively.

\section{Preliminaries}
\subsection{Bott's residue formula}
\subsubsection{almost-complex case}
Let $(M^{2n},J)$ be an almost-complex manifold with a circle action
with isolated fixed points, say $\{P_{1},\cdots,P_{r}\}$. In each
fixed point $P_{i}$, there are well-defined $n$ integer weights
$k_{1}^{(i)},\cdots, k_{n}^{(i)}$ (not necessarily distinct) induced
from the isotropy representation of this $S^{1}$-action on the
holomorphic tangent space $T_{p_{i}}M$ in the sense of $J$. Note
that these $k^{(i)}_{1},\cdots,k^{(i)}_{n}$ are \emph{nonzero} as
the fixed points are isolated. Let $f(x_{1},\cdots,x_{n})$ be a
symmetric polynomial in the variables $x_{1},\cdots,x_{n}$. Then
$f(x_{1},\cdots,x_{n})$ can be written in an essentially unique way
in terms of the elementary symmetric polynomials
$\widetilde{f}(e_{1},\cdots,e_{n})$, where
$e_{i}=e_{i}(x_{1},\cdots,x_{n})$ is the $i$-th elementary symmetric
polynomial of $x_{1},\cdots,x_{n}$.

Now we can state a version of the Bott residue formula \big(cf.
\cite{Bo} or (\cite{AS}, p.598)\big) which reduces the computations
of Chern numbers of $(M^{2n},J)$ to $\{k_{j}^{(i)}\}$, as follows.
\begin{theorem}[Bott residue formula]
With above notations understood and moreover suppose the degree of
$f(x_{1},\cdots,x_{n})$ is not greater than $n$
($\textrm{deg}(x_{i})=1$). Then\be\label{BRF1}
\sum_{i=1}^{r}\frac{f(k_{1}^{(i)},\cdots,k_{n}^{(i)})}{\prod_{j=1}^{n}k_{j}^{(i)}}=\widetilde{f}(c_{1},\cdots,c_{n})\cdot[M],\ee
where $c_{i}$ is the $i$-th Chern class of $(M^{2n},J)$ and $[M]$ is
the fundamental class of $M$ induced from $J$.\end{theorem}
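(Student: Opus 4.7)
The plan is to deduce the formula from the Atiyah--Bott--Berline--Vergne equivariant localization theorem applied to a suitable equivariant characteristic class, which is the cleanest modern packaging of Bott's original residue calculation. Throughout, let $u \in H^{2}(BS^{1};\mathbb{Z})$ denote the standard generator, so that $H^{*}_{S^{1}}(\mathrm{pt}) = \mathbb{Z}[u]$. Because the circle action preserves $J$, the tangent bundle $TM$ is $S^{1}$-equivariantly complex and thus carries equivariant Chern classes $c_{j}^{S^{1}}(TM) \in H^{2j}_{S^{1}}(M)$ that reduce to $c_{j}$ under the map $u \mapsto 0$.

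The first step is to compute the restriction of the equivariant class $\alpha := \widetilde{f}\bigl(c_{1}^{S^{1}}(TM), \dots, c_{n}^{S^{1}}(TM)\bigr)$ to each fixed point. At $P_{i}$, the isotropy representation splits $T_{P_{i}}M$ as $\bigoplus_{j=1}^{n} L_{k_{j}^{(i)}}$, where $L_{k}$ is the complex line on which $S^{1}$ acts with weight $k$, so $c_{j}^{S^{1}}(TM)|_{P_{i}} = u^{j}\, e_{j}(k_{1}^{(i)}, \dots, k_{n}^{(i)})$. Because $\widetilde{f}$ is weighted homogeneous of weight $d := \deg f$ when $\deg e_{j} = j$, substitution yields $\alpha|_{P_{i}} = u^{d}\, f(k_{1}^{(i)}, \dots, k_{n}^{(i)})$. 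The second step is to compute the equivariant Euler class of the normal bundle to each isolated fixed point, which here is just $T_{P_{i}}M$ itself; the same weight decomposition gives $e^{S^{1}}(T_{P_{i}}M) = u^{n} \prod_{j=1}^{n} k_{j}^{(i)}$, which is invertible after localizing $u$.

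The third step is to feed these into the localization theorem, which produces the identity
\[
\int_{M} \alpha \;=\; \sum_{i=1}^{r} \frac{\alpha|_{P_{i}}}{e^{S^{1}}(T_{P_{i}}M)} \;=\; u^{d-n} \sum_{i=1}^{r} \frac{f(k_{1}^{(i)}, \dots, k_{n}^{(i)})}{\prod_{j=1}^{n} k_{j}^{(i)}}
\]
in $\mathbb{Q}[u, u^{-1}]$. Matching coefficients in $\mathbb{Z}[u]$ then closes the argument in the two relevant degree regimes: when $d = n$ the left side equals the ordinary Chern number $\widetilde{f}(c_{1}, \dots, c_{n})[M]$, yielding the stated formula; when $d < n$ both sides vanish for degree reasons, which is consistent with $\widetilde{f}(c_{1}, \dots, c_{n})[M] = 0$ since the class sits strictly below the top degree of $M$.

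The main obstacle is the localization theorem itself, which is the serious input; once it is taken as a black box the remaining work is essentially bookkeeping of weights and a careful match of powers of $u$. An alternative approach would follow Bott's original route via Chern--Weil theory and an $S^{1}$-invariant connection with curvature supported near the zeros of the fundamental vector field of the action, but that computation is heavier and now usually superseded by the equivariant-cohomology derivation above.
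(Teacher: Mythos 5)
Your derivation is correct in substance. A point of comparison: the paper does not actually prove this theorem at all — it quotes it as a known result, citing Bott's original paper and Atiyah--Singer, and only remarks afterwards (in the remark following the proof of Theorem 1.5) that Bott's formula can be placed in the equivariant-cohomology framework of Atiyah--Bott and Berline--Vergne. So where the paper has a citation, you supply the standard modern proof via ABBV localization, and your bookkeeping is right: the restriction $c_{j}^{S^{1}}(TM)|_{P_{i}}=u^{j}e_{j}(k_{1}^{(i)},\dots,k_{n}^{(i)})$, the equivariant Euler class $u^{n}\prod_{j}k_{j}^{(i)}$ (invertible since all $k_{j}^{(i)}\neq 0$), and the degree count on $\int_{M}\alpha\in H^{2(d-n)}_{S^{1}}(\mathrm{pt})$, which forces vanishing for $d<n$ and reduces to the ordinary Chern number $\widetilde{f}(c_{1},\dots,c_{n})[M]$ for $d=n$. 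The only loose end is that the theorem as stated allows $f$ to be a non-homogeneous symmetric polynomial of degree at most $n$, while your weight argument ($\alpha|_{P_{i}}=u^{d}f(k^{(i)})$) implicitly assumes $f$ homogeneous of degree $d$; this is repaired in one line by decomposing $f$ into homogeneous pieces and using linearity of both sides, noting that pairing with $[M]$ kills every piece of degree below $n$. With that remark added, your argument is a complete and legitimate alternative to the paper's citation, at the cost of taking the ABBV localization theorem as a black box — essentially the same trade-off Bott's Chern--Weil argument would make with an invariant connection, as you note.
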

\begin{remark}\label{AMRMK}
If the degree of $f(x_{1},\cdots,x_{n})$ is less than $n$, then the
left-hand side of (\ref{BRF1}) vanishes. If the action has no fixed
points (though not necessarily free) it follows that all Chern
numbers are zero.
\end{remark}
\subsubsection{smooth case}
Let $N^{2n}$ be a smooth manifold with a circle action with isolated
fixed points, say $\{P_{1},\cdots,P_{r}\}$. In each fixed point
$P_{i}$, the tangent space $T_{p_{i}}N$ splits as an $S^{1}$-module
induced from the isotropy representation as
follows
$$T_{p_{i}}N=\bigoplus_{j=1}^{n}V^{(i)}_{j},$$
where each $V_{j}^{(i)}$ is a real $2$-plane. We choose an
isomorphism of $\mathbb{C}$ with $V_{j}^{(i)}$ relative to which the
representation of $S^{1}$ on $V_{j}^{(i)}$ is given by
$e^{\sqrt{-1}\theta}\mapsto e^{\sqrt{-1}k_{j}^{(i)}\theta}$ with
$k_{j}^{(i)}\in\mathbb{Z}-\{0\}$. We can assume the rotation numbers
$k^{(i)}_{1},\cdots,k^{(i)}_{n}$ be chosen in such a way that the
usual orientations on the summands $V^{(i)}_{j}\cong\mathbb{C}$
induce the given orientation on $T_{p_{i}}N$. Note that these
$k^{(i)}_{1},\cdots,k^{(i)}_{n}$ are uniquely defined up to even
number of sign changes. In particular, their product
$\prod_{j=1}^{n}k_{j}^{(i)}$ is well-defined.

Let $f(x_{1}^{2},\cdots,x_{n}^{2})$ be a symmetric polynomial in the
variables $x_{1}^{2},\cdots,x_{n}^{2}$. Let
$\sigma_{i}=\sigma_{i}(x_{1}^{2},\cdots,x_{n}^{2})$ be the $i$-th
elementary symmetric polynomial in the variables
$x_{1}^{2},\cdots,x_{n}^{2}$. Then $f(x_{1}^{2},\cdots,x_{n}^{2})$
can be written in an essentially unique way in terms of
$\sigma_{1},\cdots,\sigma_{n}$, say
$\widetilde{f}(\sigma_{1},\cdots,\sigma_{n})$. Then we have
\begin{theorem}[Bott residue formula]
With above notations understood and moreover suppose the degree of
$f(x_{1}^{2},\cdots,x_{n}^{2})$ is not greater than $n$
(deg$(x_{i})=1$). Then\be\label{BRF2}
\sum_{i=1}^{r}\frac{f((k_{1}^{(i)})^{2},\cdots,(k_{n}^{(i)})^{2})}{\prod_{j=1}^{n}k_{j}^{(i)}}=\widetilde{f}(p_{1},\cdots,p_{n})\cdot[N],\ee
where $p_{i}$ is the $i$-th Pontrjagin class of $N$ and $[N]$ is the
fundamental class of $N$ determined by the orientation.
\end{theorem}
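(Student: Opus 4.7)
The plan is to derive this smooth Bott residue formula from the Atiyah--Bott--Berline--Vergne (ABBV) equivariant localization theorem for compact oriented $S^1$-manifolds with isolated fixed points. The underlying identity is that for any equivariant cohomology class $\alpha \in H^*_{S^1}(N)$, the equivariant push-forward $\pi^{S^1}_*\alpha$ equals $\sum_i \alpha|_{P_i}\big/e^{S^1}(T_{P_i}N)$, where $e^{S^1}$ denotes the equivariant Euler class.

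First, I would set up the equivariant data at each fixed point. Let $u \in H^2(BS^1;\mathbb{Z})$ be the standard generator, so that $H^*_{S^1}(\mathrm{pt}) = \mathbb{Z}[u]$. For the equivariant Pontrjagin classes $p_m^{S^1}(N) \in H^{4m}_{S^1}(N)$, I would compute their restriction at $P_i$ by complexifying the given 2-plane decomposition: one has $V_j^{(i)} \otimes \mathbb{C} \cong L_{k_j^{(i)}} \oplus L_{-k_j^{(i)}}$ as equivariant complex line bundles, and the identity $p_m = (-1)^m c_{2m}(\,\cdot\otimes\mathbb{C})$ gives
\[
p_m^{S^1}(N)\big|_{P_i} \;=\; u^{2m}\,\sigma_m\bigl((k_1^{(i)})^2,\ldots,(k_n^{(i)})^2\bigr).
\]
Similarly, the equivariant Euler class of the oriented real tangent space $T_{P_i}N$ is $e^{S^1}(T_{P_i}N) = u^n\prod_{j=1}^n k_j^{(i)}$, the sign being pinned down by the convention that the $k_j^{(i)}$ are chosen so that the direct-sum orientation agrees with the given one on $T_{P_i}N$.

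Next, I would apply ABBV to the equivariant class $\alpha := \widetilde{f}\bigl(p_1^{S^1}(N),\ldots,p_n^{S^1}(N)\bigr)$. With the grading $\deg\sigma_m = 2m$, the hypothesis $\deg f \le n$ means $\alpha$ has cohomological degree at most $2n$. Substituting the above restriction and using the homogeneity of $\widetilde{f}$, one obtains $\alpha|_{P_i} = u^{\deg f}\,f\bigl((k_1^{(i)})^2,\ldots,(k_n^{(i)})^2\bigr)$, and localization then yields
\[
\pi^{S^1}_*\alpha \;=\; u^{\deg f - n}\sum_{i=1}^{r}\frac{f\bigl((k_1^{(i)})^2,\ldots,(k_n^{(i)})^2\bigr)}{\prod_{j=1}^{n} k_j^{(i)}}.
\]

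Finally, I would match this against the ordinary integral. When $\deg f = n$, the push-forward has cohomological degree zero, and the constant-in-$u$ coefficient of $\alpha$ is exactly the ordinary class $\widetilde{f}(p_1,\ldots,p_n)$; evaluation on $[N]$ delivers the stated identity. When $\deg f < n$, the ordinary side $\widetilde{f}(p_1,\ldots,p_n)[N]$ vanishes for dimensional reasons, while the localized expression carries a negative power of $u$; since $\pi^{S^1}_*\alpha \in \mathbb{Z}[u]$, the sum on the right must itself vanish, so both sides agree. The main obstacle, aside from invoking ABBV in the purely oriented (rather than holomorphic) setting, is the sign bookkeeping: the product $\prod_j k_j^{(i)}$ is only well-defined up to an even number of sign changes in the individual $k_j^{(i)}$, and one must verify that this sign ambiguity matches exactly the ambiguity in the equivariant Euler class produced by the chosen isomorphisms $V_j^{(i)} \cong \mathbb{C}$ compatible with the global orientation of $N$.
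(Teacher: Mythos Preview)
The paper does not actually prove this statement: both versions of the Bott residue formula are stated in Section~2.1 as preliminary results, with references to Bott's original paper and Atiyah--Singer, and are then used as black boxes in the rest of the argument. So there is no ``paper's own proof'' to compare against.

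That said, your derivation via ABBV localization is a standard and essentially correct route to the formula. One small point: you write $\alpha|_{P_i} = u^{\deg f}\, f\bigl((k_1^{(i)})^2,\ldots,(k_n^{(i)})^2\bigr)$ ``using the homogeneity of $\widetilde{f}$'', but nothing in the statement forces $f$ to be homogeneous. This is harmless, since one may decompose $f$ into homogeneous pieces and apply your argument term by term; the pieces of degree strictly less than $n$ contribute zero on both sides (by the negative-power-of-$u$ argument you already give), and the degree-$n$ piece gives the characteristic number. You might make this reduction explicit. The sign bookkeeping you flag at the end is indeed the only genuinely delicate point, and your description of how the orientation convention pins down $\prod_j k_j^{(i)}$ as the equivariant Euler class is correct.
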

\begin{remark}
Since deg$(f(x_{1}^{2},\cdots,x_{n}^{2}))\leq n$, what possible
appear in $\widetilde{f}(p_{1},\cdots,p_{n})$ are
$p_{1},\cdots,p_{[\frac{n}{2}]}$.
$\widetilde{f}(p_{1},\cdots,p_{n})\cdot[N]$ is nonzero only if $n$
is even. If $\textrm{deg}(f(x_{1}^{2},\cdots,x_{n}^{2}))<n$, then
the left-hand side of (\ref{BRF2}) vanishes. If the action has no
fixed points (though not necessarily free), it follows that all
Pontrjagin numbers are zero.
\end{remark}

\subsection{A rigidity result}
In this subsection we want to prove a special rigidity result for
circle actions on almost-complex manifolds with isolated fixed
points. For more details on the rigidity of elliptic complexes, see
\cite{Liu1} and
\cite{Liu2}.

For more details on the following paragraphs in this subsection, we
recommend the readers the references \cite{Hi2} or Appendix III of
\cite{HBJ}. Let $(M^{2n},J)$ be an almost-complex manifold with
first Chern class $c_{1}\in H^{2}(M;\mathbb{Z})$ divisible by a
positive integer $d>1$. Then there exists a complex line bundle $L$
over $M$ such that $L^{\otimes d}=K$, where $K$ is the canonical
complex line bundle of $M$ in the sense of $J$. Let $\chi(M,L)$ be
the complex genus (\cite{HBJ}, p.18) corresponding to the
characteristic power series \be\label{CPS}\frac{x}{1-e^{-x}}\cdot
e^{-\frac{x}{d}}.\ee Note that the Todd genus corresponds to the
characteristic power series $\frac{x}{1-e^{-x}}$, which means
\be\label{relation}\chi(M,L)=\big(\textrm{ch}(L)\cdot\textrm{td}(M)\big)[M].\ee
Here $\textrm{ch}(L)$ is the Chern character of $L$ and
$\textrm{td}(M)$ is the Todd class of $M$. By (\ref{relation}),
$\chi(M,L)$ can be realized as the index of a suitable elliptic
operator twisted by $L$ (cf. \cite{HBJ}, p.167).

Now suppose we have an $S^{1}$-action on $(M^{2n},J)$. Consider the
$d$-fold covering $S^{1}\rightarrow S^{1}$ with
$\mu\mapsto\lambda=\mu^{d}$. Then $\mu$ acts on $M$ and $K$ through
$\lambda$. This action can be lifted to $L$. Then for any $g\in
S^{1}$, we can define the equivariant index $\chi(g;M,L)$, which is
a finite Laurent series in $g$.

Now suppose this circle action on $M$ has isolated fixed points.
Using the notations in Section $2.1.1$, we have
\begin{proposition}\label{rigidity proposition}Suppose the first Chern class of $M$ is divisible
by $d>1$. Then the rational function
$$\sum_{i=1}^{r}\frac{g^{\frac{\sum_{j=1}^{n}k_{j}^{(i)}}{d}}}{\prod_{j=1}^{n}(1-g^{k_{j}^{(i)}})}$$
is identically equal to $0$, where $g$ is an indeterminate.
\end{proposition}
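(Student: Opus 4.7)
The approach is to recognise the rational function as the Atiyah--Bott expression for an equivariant holomorphic Euler characteristic, and then show that this Euler characteristic, viewed as a Laurent polynomial in $\mu = g^{1/d}$, vanishes at both boundary points $\mu = 0$ and $\mu = \infty$. First I would lift the $S^{1}$-action to $L$ along the $d$-fold cover $\mu\mapsto\mu^{d}=\lambda$ introduced in the setup: since the $\mu$-weight on $T_{P_{i}}M$ is $dk_{j}^{(i)}$, the $\mu$-weight on $K_{P_{i}}=\det T^{*}_{P_{i}}M$ equals $-d\sum_{j}k_{j}^{(i)}$, and the relation $L^{\otimes d}=K$ then forces the $\mu$-weight on $L_{P_{i}}$ to be the automatically integral value $-\sum_{j}k_{j}^{(i)}$. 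It is precisely to secure this integer lift at every fixed point that the $d$-fold cover was introduced.

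Applying the Atiyah--Bott holomorphic Lefschetz fixed point formula to the Dolbeault operator twisted by $L$ would then give
\[
\chi(\mu;M,L)=\sum_{i=1}^{r}\frac{\mu^{-\sum_{j}k_{j}^{(i)}}}{\prod_{j=1}^{n}\bigl(1-\mu^{-dk_{j}^{(i)}}\bigr)},
\]
and substituting $g=\mu^{d}$ and applying $1-g^{-k}=-g^{-k}(1-g^{k})$ in each factor of the denominator rearranges the right hand side, up to an overall sign $(-1)^{n}$, into the rational function $F(g)$ of the proposition. The essential gain from this identification is that $F(g)$, being a virtual character of $S^{1}$ on the finite-dimensional cohomology $H^{\ast}(M,L)$, is in fact a Laurent polynomial in $\mu=g^{1/d}$ with no poles on $\mathbb{C}^{*}$.

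It then remains to show that $F(g)\to 0$ both as $g\to\infty$ and as $g\to 0$. Using $1-g^{k}\sim -g^{k}$ when $k>0$ and $1-g^{k}\sim 1$ when $k<0$, the $i$-th summand of $F$ has order at infinity equal to
\[
\frac{\sum_{j}k_{j}^{(i)}}{d}-\sum_{k_{j}^{(i)}>0}k_{j}^{(i)}=-\frac{1}{d}\Bigl[(d-1)\sum_{k_{j}^{(i)}>0}k_{j}^{(i)}+\sum_{k_{j}^{(i)}<0}\abs{k_{j}^{(i)}}\Bigr],
\]
which is strictly negative because $d>1$ and all $k_{j}^{(i)}$ are nonzero (isolated fixed points); a symmetric computation at $g\to 0$ produces a strictly positive order. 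Hence each summand, and therefore $F(g)$, tends to zero at both $g=0$ and $g=\infty$, forcing the Laurent polynomial to vanish identically. The expected main obstacle is the sign and exponent bookkeeping required to line up the Atiyah--Bott output exactly with the form displayed in the statement; the asymptotic step itself is very short and also makes transparent why the hypothesis $d>1$ is indispensable, since for $d=1$ the order at infinity degenerates to $\sum_{k_{j}^{(i)}<0}k_{j}^{(i)}\le 0$, consistent with the fact that the $d=1$ sum reduces to $\chi(M,K)$, which need not vanish.
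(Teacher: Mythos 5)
Your argument is correct, but it reaches the conclusion by a genuinely different route than the paper. The paper quotes Hirzebruch's rigidity theorem at level $d$ as a black box: $\chi(g;M,L)\equiv\chi(M,L)$ is \emph{constant} in $g$, so a single limit $g\to\infty$ (where every local term visibly tends to $0$) forces the constant, hence the whole sum, to vanish. You never invoke rigidity; instead you use only the much softer fact (stated in the paper's setup) that the equivariant index is a finite virtual character, i.e.\ a Laurent polynomial in $\mu=g^{1/d}$, and you compensate by computing the order of each local term at \emph{both} ends $g\to 0$ and $g\to\infty$, where the hypothesis $d>1$ together with $k_j^{(i)}\neq 0$ makes both orders strictly of the right sign; a Laurent polynomial vanishing at $0$ and $\infty$ is identically zero. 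In effect you reprove the relevant special case of the rigidity theorem rather than cite it, which makes the proof more self-contained and makes the role of $d>1$ transparent, at the cost of a second asymptotic computation. Two small bookkeeping points, neither a gap: your Atiyah--Bott output, after the rearrangement $1-g^{-k}=-g^{-k}(1-g^{k})$, is $(-1)^{n}$ times the proposition's sum with $g$ replaced by $g^{-1}$ (exponent $\frac{(d-1)\sum_j k_j^{(i)}}{d}$ rather than $\frac{\sum_j k_j^{(i)}}{d}$), which is immaterial since vanishing of $F(g^{-1})$ is equivalent to vanishing of $F(g)$ and you control both ends anyway; and the lift of the covering action to $L$ is only unique up to an overall character $\mu^{c}$, which multiplies the fixed-point sum by a monomial and therefore affects neither the Laurent-polynomial property nor the two-sided limit argument.
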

\begin{proof}
Suppose $g\in S^{1}$ is a topological generator. Then the fixed
points of the action $g$ are exactly $\{P_{1},\cdots,P_{r}\}$. Note
that the characteristic power series corresponding to $\chi(M,L)$ is
(\ref{CPS}), then the Lefschetz fixed point formula of
Atiyah-Bott-Segal-Singer (\cite{AS}, p.562) tells us that
$$\chi(g;M,L)=\sum_{i=1}^{r}\prod_{j=1}^{n}\frac{g^{\frac{k_{j}^{(i)}}{d}}}{1-g^{k_{j}^{(i)}}}.$$
The rigidity result of almost-complex manifolds on the level of $d$
\big(cf. (p.43 and p.58 of \cite{Hi2}) or (p.173 and p.183 of
\cite{HBJ})\big) tells us that, for any topological generator $g\in
S^{1}$,
$$\chi(g;M,L)\equiv\chi(M,L).$$ Since the topological generators in
$S^{1}$ are dense, we have an identity
$$\chi(M,L)\equiv\sum_{i=1}^{r}\prod_{j=1}^{n}\frac{g^{\frac{k_{j}^{(i)}}{d}}}{1-g^{k_{j}^{(i)}}}$$
 for any indeterminate $g$.

For any $k_{j}^{(i)}\in\mathbb{Z}-\{0\},$ we have
$$\lim_{g\rightarrow\infty}\frac{g^{\frac{k_{j}^{(i)}}{d}}}{1-g^{k_{j}^{(i)}}}=0.$$
 Therefore,

$$\sum_{i=1}^{r}\prod_{j=1}^{n}\frac{g^{\frac{k_{j}^{(i)}}{d}}}{1-g^{k_{j}^{(i)}}}\equiv 0$$
for any indeterminate $g$, which completes the proof.
\end{proof}

\begin{remark}
The appendix III of \cite{HBJ} is only a copy of \cite{Hi2}.
Although the results in \cite{Hi2} are formulated for complex
manifolds, the tools and methods can also been applied to
almost-complex manifolds. Hence the results in \cite{Hi2} are also
valid for \emph{almost-complex manifolds}, which have been pointed
out by Hirzebruch himself in his original paper (p.38 of \cite{Hi2}
or p.170 of \cite{HBJ}).
\end{remark}
\section{Proof of main results}
\subsection{Proof of Theorem \ref{maintheorem1}}
Now suppose $(M^{2mn},J)$ is an almost-complex manifold with some
Chern number $(c_{\lambda_{1}}\cdots c_{\lambda_{u}})^{n}[M]\neq 0$.
Note that any $S^{1}$-action on $M$ must have at least one fixed
point, otherwise all the Chern numbers of $M$ vanish by Remark
\ref{AMRMK}. If the fixed point set of the $S^{1}$-action is not
isolated, then at least one connected component is a submanifold of
positive dimension. In this case there are infinitely many fixed
points. To complete the proof of the first part of Theorem
\ref{maintheorem1}, it suffices to consider the $S^{1}$-actions with
nonempty isolated fixed points.

Like the notations in Section $2.1.1$, we assume the isolated fixed
points are $\{P_{1},\cdots,P_{r}\}$. In each fixed point $P_{i}$ we
have $mn$ integer weights $k^{(i)}_{1}, k^{(i)}_{2}\cdots,
k^{(i)}_{mn}$. Given any partition
$\lambda=(\lambda_{1},\cdots,\lambda_{u})$ of weight $m$, We define
$$c_{\lambda}(i):=\prod_{t=1}^{u}\big(\sum_{1\leq j_{1}<j_{2}<\cdots<j_{\lambda_{t}}\leq mn}k^{(i)}_{j_1}k^{(i)}_{j_2}\cdots
k^{(i)}_{j_{\lambda_{t}}}\big).$$ Let \be\label{Chern
map}\{c_{\lambda}(i)~|~1\leq i\leq
r\}=\{s_{1},\cdots,s_{l}\}\subset\mathbb{Z}\ee and define
$$A_{t}:=\sum_{\substack{1\leq i\leq r \\ c_{\lambda}(i)=s_{t}}}\frac{1}{\prod_{j=1}^{mn}k_{j}^{(i)}},\qquad 1\leq t\leq
l.$$
\begin{lemma}\label{main1lemma1}
If the Chern number $(c_{\lambda_{1}}\cdots
c_{\lambda_{u}})^{n}[M]\neq 0$, then at least one of $A_{t}$ is
nonzero.
\end{lemma}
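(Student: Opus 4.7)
The plan is to recognize this as a direct application of the Bott residue formula (\ref{BRF1}) from Section 2.1.1, applied to a carefully chosen symmetric polynomial, and then to use the grouping of fixed points by the value of $c_\lambda(i)$ to identify the left-hand side with a sum involving the $A_t$.

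First I would let $e_{\lambda_t}(x_1,\ldots,x_{mn})$ denote the $\lambda_t$-th elementary symmetric polynomial in $mn$ variables and consider
\[
f(x_1,\ldots,x_{mn}) \;:=\; \bigl(e_{\lambda_1}(x_1,\ldots,x_{mn})\cdots e_{\lambda_u}(x_1,\ldots,x_{mn})\bigr)^{n}.
\]
This is a symmetric polynomial of degree $(\lambda_1+\cdots+\lambda_u)\cdot n = mn$, which equals the complex dimension of $M$, so the hypothesis of Theorem (Bott residue formula) is met. By the definition of Chern classes via the splitting principle, the expression $\widetilde{f}(c_1,\ldots,c_{mn})$ is exactly $(c_{\lambda_1}\cdots c_{\lambda_u})^{n}$. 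Moreover, by the very definition of $c_\lambda(i)$ in the statement of the lemma,
\[
f(k_1^{(i)},\ldots,k_{mn}^{(i)}) \;=\; c_\lambda(i)^{n}.
\]

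Second I would plug these identifications into (\ref{BRF1}) to obtain
\[
\sum_{i=1}^{r}\frac{c_\lambda(i)^{n}}{\prod_{j=1}^{mn}k_{j}^{(i)}} \;=\; (c_{\lambda_1}\cdots c_{\lambda_u})^{n}[M] \;\neq\; 0.
\]
Then I would regroup the sum on the left according to the distinct values $s_1,\ldots,s_l$ of $c_\lambda(i)$ as in (\ref{Chern map}), which collapses to $\sum_{t=1}^{l} s_t^{n}\, A_t$. Hence
\[
\sum_{t=1}^{l} s_t^{n}\, A_t \;=\; (c_{\lambda_1}\cdots c_{\lambda_u})^{n}[M]\;\neq\;0,
\]
so at least one $A_t$ must be nonzero, which is exactly the conclusion.

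There is essentially no hard step here; the proof is a bookkeeping argument. The only point that deserves a moment's care is confirming that the polynomial $(e_{\lambda_1}\cdots e_{\lambda_u})^{n}$ lies within the degree bound of Bott's formula, so that (\ref{BRF1}) is applicable; this follows from $\lambda_1+\cdots+\lambda_u=m$. Everything else is just unwinding the definition of $c_\lambda(i)$ and the partitioning of the index set by the values $s_t$.
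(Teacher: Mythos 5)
Your proof is correct and is essentially the paper's own argument: both apply the Bott residue formula (\ref{BRF1}) to $f=(e_{\lambda_1}\cdots e_{\lambda_u})^n$ and regroup the fixed points by the values $s_t$ to get $\sum_t s_t^n A_t=(c_{\lambda_1}\cdots c_{\lambda_u})^n[M]$. The only difference is that the paper phrases it contrapositively (if all $A_t=0$ then the Chern number vanishes), while you argue directly; this is immaterial.
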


 \begin{proof} Suppose $A_{t}=0$ for
all $t=1,\cdots,l$. Then Bott residue formula (\ref{BRF1}) tells us
$$(c_{\lambda_{1}}\cdots
c_{\lambda_{u}})^{n}[M]=\sum_{t=1}^{l}(s_{t})^{n}\cdot A_{t}=0.$$
\end{proof}

The following lemma is inspired by (\cite{PT}, Lemma
$8$).

\begin{lemma}\label{main1lemma2}
If $r$, the number of the fixed points, is no more than $n$, then
$A_{t}=0$ for all $t=1,\cdots,l$.
\end{lemma}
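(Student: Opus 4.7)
The plan is to apply the Bott residue formula (\ref{BRF1}) to a family of symmetric polynomials interpolating between the different powers of $(c_{\lambda_{1}}\cdots c_{\lambda_{u}})$, and then to extract the vanishing of each $A_{t}$ by a Vandermonde argument.

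Concretely, for each integer $q\ge 0$ I would apply (\ref{BRF1}) to the symmetric polynomial
$$f_{q}(x_{1},\cdots,x_{mn}):=\big(e_{\lambda_{1}}\cdots e_{\lambda_{u}}\big)^{q},$$
where $e_{j}=e_{j}(x_{1},\cdots,x_{mn})$ is the $j$-th elementary symmetric polynomial. Since $\lambda$ has weight $m$, the polynomial $f_{q}$ has total degree $qm$ in the variables $x_{1},\cdots,x_{mn}$, and its expression in terms of Chern classes is exactly $(c_{\lambda_{1}}\cdots c_{\lambda_{u}})^{q}$. A direct inspection of the definition of $c_{\lambda}(i)$ shows that
$$f_{q}\big(k_{1}^{(i)},\cdots,k_{mn}^{(i)}\big)\;=\;c_{\lambda}(i)^{q},$$
so grouping the fixed points by the value of $c_{\lambda}(i)$ rewrites the left-hand side of (\ref{BRF1}) for $f_{q}$ as $\sum_{t=1}^{l} s_{t}^{q}\,A_{t}$.

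For $q=0,1,\ldots,n-1$ the degree $qm$ is strictly less than the complex dimension $mn$, so Remark \ref{AMRMK} forces this quantity to vanish. I therefore obtain the linear system
$$\sum_{t=1}^{l} s_{t}^{q}\,A_{t}\;=\;0,\qquad q=0,1,\ldots,n-1.$$
Under the hypothesis $l\le r\le n$, restricting to $q=0,1,\ldots,l-1$ already gives $l$ equations in the $l$ unknowns $A_{1},\ldots,A_{l}$, whose coefficient matrix $(s_{t}^{q})_{0\le q\le l-1,\,1\le t\le l}$ is a Vandermonde matrix in the pairwise distinct values $s_{1},\ldots,s_{l}$ (distinctness being built into (\ref{Chern map})). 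Its determinant $\prod_{i<j}(s_{j}-s_{i})$ is nonzero, so the system forces $A_{1}=\cdots=A_{l}=0$, completing the proof. There is no real obstacle here: the only nontrivial points to verify are the degree count $\deg f_{q}=qm$ and the combinatorial identification of $f_{q}(k_{1}^{(i)},\cdots,k_{mn}^{(i)})$ with $c_{\lambda}(i)^{q}$, both of which are routine from the definitions.
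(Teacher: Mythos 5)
Your proposal is correct and follows essentially the same route as the paper: you apply the Bott residue formula to the powers $(e_{\lambda_{1}}\cdots e_{\lambda_{u}})^{q}$ of degree $qm<mn$, use Remark \ref{AMRMK} to make each localized sum $\sum_{t}s_{t}^{q}A_{t}$ vanish, and conclude by the nonsingularity of the Vandermonde matrix in the distinct values $s_{1},\cdots,s_{l}$. The only cosmetic difference is that you index the polynomials by $q=0,\ldots,n-1$ (restricting to $q\le l-1$) rather than the paper's $i=0,\ldots,r-1$, which changes nothing in substance.
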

\begin{proof}
For each $i=0,1,\cdots, r-1,$ take
$$f_{i}(x_{1},\cdots,x_{mn})=\big[\prod_{t=1}^{u}\big(\sum_{1\leq
j_{1}<\cdots<j_{\lambda_{t}}\leq mn}x_{j_{1}}x_{j_{2}}\cdots
x_{j_{\lambda_{t}}}\big)\big]^{i}.$$

Here $f_{0}(x_{1},\cdots,x_{mn})=1$. Note that the degree of
$f_{i}(x_{1},\cdots,x_{mn})$ is $mi$ as the weight of $\lambda$ is
$m$, and therefore is less than $mn$ as $r\leq n$ by assumption.
Replacing $f(x_{1},\cdots,x_{mn})$ in Theorem \ref{BRF1} by
$f_{i}(x_{1},\cdots,x_{mn})$ for $i=0,1,\cdots, r-1,$ we have
\begin{eqnarray}\label{matrix}
\left\{\begin{array}{l}
A_{1}+A_{2}+\cdots+A_{l}=0\\
s_{1}A_{1}+s_{2}A_{2}+\cdots+s_{l}A_{l}=0\\
\vdots\\
(s_{1})^{r-1}A_{1}+(s_{2})^{r-1}A_{2}+\cdots+(s_{l})^{r-1}A_{l}=0
\end{array}
\right. \end{eqnarray} Note that $l$ is no more than $r$ by
(\ref{Chern map}) and $s_{1},\cdots,s_{l}$ are mutually distinct,
which means the coefficient matrix of the first $l$ lines of
(\ref{matrix}) is the nonsingular Vandermonde matrix. Hence the only
possibility is
$$A_{1}=\cdots=A_{l}=0.$$
\end{proof}
Combining Lemma \ref{main1lemma1} with Lemma \ref{main1lemma2} will
lead to the proof of the first part of Theorem \ref{maintheorem1}.
The proof of the second part is similar and so we omit it.
\begin{remark}
It is not surprise that Bott's residue formula we used here is
similar to the Atiyah-Bott-Berline-Vergne localization formula used
in \cite{PT}. In fact it turns out that Bott's residue formula can
be put into the framework of the equivariant cohomology theory
(\cite{AB}, \cite{BV}). But Bott's original formula is more suitable
for our purpose. Note that our sufficient condition (vanishing of
some characteristic number) guaranteeing an explicit lower bound of
the number of fixed points relies \emph{only} on the manifold itself
while the sufficient condition in (\cite{PT}, Theorem 1) relies on
the data near the fixed points of the action. But it seems to us
that our result is independent of that of Pelayo-Tolman.
\end{remark}
\subsection{Proof of Theorem \ref{maintheorem2}}
In this section we assume that $N^{2n}$ has a circle action with
isolated fixed points and keep the notations of Section $2.1.2$ in
mind.

The following proposition says, if the action is semi-free, then
$[N^{2n}]$ is at most a torsion element in the oriented cobordism
ring
$\Omega_{\ast}^{SO}$.

\begin{proposition}[Pantilie-Wood]\label{vanish prop}
Suppose $N^{2n}$ has a semi-free $S^{1}$-action with isolated fixed
points. Then all the Pontrjagin numbers of $N^{2n}$ vanish.
Equivalently,
$$[N^{2n}]=0\in\Omega_{\ast}^{SO}\otimes\mathbb{Q}.$$
\end{proposition}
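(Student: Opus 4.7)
The plan is to apply the smooth Bott residue formula (\ref{BRF2}) after first pinning down exactly what the local weights can be at the isolated fixed points in the semi-free case.

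First I would observe that semi-freeness together with isolated fixed points forces $|k_j^{(i)}| = 1$ for every weight at every fixed point. Indeed, linearizing the action near $P_i$, a nonzero vector lying inside the single weight-space $V_j^{(i)}$ has isotropy subgroup $\mathbb{Z}/|k_j^{(i)}|$ in $S^{1}$; semi-freeness requires this to be either trivial or all of $S^{1}$, and since $k_j^{(i)} \neq 0$ the only possibility is $|k_j^{(i)}| = 1$. Let $q_i$ denote the number of negative weights at $P_i$ (well-defined modulo $2$ under the allowed sign changes), so that $\prod_{j=1}^{n} k_j^{(i)} = (-1)^{q_i}$.

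Now for any symmetric polynomial $f(x_1^2,\ldots,x_n^2)$, evaluation at the squares $(k_j^{(i)})^2 = 1$ gives the constant value $f(1,\ldots,1)$, independent of $i$. Substituting into (\ref{BRF2}) produces
$$\widetilde{f}(p_1,\ldots,p_n)\cdot[N] \;=\; f(1,\ldots,1)\sum_{i=1}^{r}(-1)^{q_i}.$$
Specializing to the constant polynomial $f \equiv 1$, whose degree $0$ is strictly less than $n$, the remark following (\ref{BRF2}) forces the left-hand side to vanish, giving the numerical identity $\sum_{i=1}^{r}(-1)^{q_i} = 0$. Plugging this back in shows $\widetilde{f}(p_1,\ldots,p_n)\cdot[N] = 0$ for every admissible $f$, hence every Pontrjagin number of $N^{2n}$ vanishes.

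The final sentence of the proposition follows from Thom's theorem: two closed oriented manifolds represent the same class in $\Omega_{\ast}^{SO}\otimes\mathbb{Q}$ iff all their Pontrjagin numbers agree, so vanishing of all Pontrjagin numbers is equivalent to $[N^{2n}] = 0 \in \Omega_{\ast}^{SO}\otimes\mathbb{Q}$. The only real obstacle is the opening observation that semi-freeness pins all weights to $\pm 1$; once that is in hand the Bott formula collapses to a one-line identity and the rest is bookkeeping.
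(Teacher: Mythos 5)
Your argument is correct and is essentially the paper's own proof: semi-freeness forces all weights to be $\pm 1$, taking $f\equiv 1$ in the Bott residue formula (\ref{BRF2}) yields the signed count $\sum_i(-1)^{q_i}=0$ (the paper's $\rho_0-\rho_1=0$), and every Pontrjagin number is then a binomial-coefficient multiple of this signed count, hence zero, with Thom's theorem giving the cobordism reformulation. The only cosmetic difference is that the paper separates the trivial odd-$n$ case, which your uniform treatment handles implicitly.
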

The proof of (\cite{PW}, Theorem 1.1) also uses Bott residue
formula, in the language of differential geometry. Here we give a
quite direct topological proof, although the essential is the same.
\begin{proof}
When $n$ is odd, this proposition obviously holds for dimensional
reason.

Suppose $n$ is even, say $2q$. As noted in Section $2.1.2$, in each
fixed point $P_{i}$, the weights $k_{1}^{(i)},\cdots,k_{n}^{(i)}$
are unique up to even number of sign changes. Since the action is
semi-free, all these $k_{j}^{(i)}$ are $\pm 1$. Let $\rho_{0}$
(resp. $\rho_{1}$) be the number of fixed points with even (resp.
odd) $-1$.

Take $f=1$ in (\ref{BRF2}), we have \be\label{even number formula}
\rho_{0}-\rho_{1}=0,\ee which means the number of the fixed points
are even.

Let $\lambda=(\lambda_{1},\cdots,\lambda_{l})$ be a partition of
$q$. According to (\ref{BRF2}), the corresponding Pontrjagin number
$p_{\lambda}[N]=p_{\lambda_{1}}\cdots p_{\lambda_{l}}[N]$ equals to
$$\binom{2q}{\lambda_{1}}\cdots\binom{2q}{\lambda_{l}}( \rho_{0}-\rho_{1})=0.$$
This completes the proof of this proposition.
\end{proof}

In their famous book \cite{CF}, Conner and Floyd have developed
several bordism techniques and found many interesting applications
in manifolds with group actions. In \cite{KU}, by using the
techniques in \cite{CF}, Kawakubo and Uchida proved several
interesting results related to the signature of manifolds admitting
semi-free circle actions. Among other things, they proved a result
(\cite{KU}, Lemma 2.2), which localizes the cobordism class of the
global manifold to those of the connected components of the fixed
point set. This result is purely constructive and the key ideas are
taken from \cite{CF}. For more details, please consult the original
paper \cite{KU}. Here, for our purpose, from (\cite{KU}, Lemma 2.2)
we have \be\label{cobordism
formula}[N^{2n}]=\sum_{P_{i}}[\mathbb{C}P^{n}\big|_{P_{i}}]\in\Omega_{\ast}^{SO},\ee
where $\mathbb{C}P^{n}\big|_{P_{i}}$ is the $n$-dimensional complex
projective space associated to the fixed point $P_{i}$ and given a
suitable orientation.

When $n$ is odd, $[\mathbb{C}P^{n}]=0\in\Omega_{\ast}^{SO}$ (cf.
\cite{HBJ}, p.1) and therefore $[N^{2n}]=0$.

When $n$ is even, say $2q$, Proposition \ref{vanish prop} and
(\ref{cobordism formula}) imply \be\label{cobordism formula
2}\sum_{P_{i}}[\mathbb{C}P^{2q}\big|_{P_{i}}]=0\in\Omega_{\ast}^{SO}\otimes\mathbb{Q}.\ee
It is well-known that $[\mathbb{C}P^{2q}]$ is \emph{not} a torsion
element. From (\ref{even number formula}) we have known the number
of the fixed points $\{P_{i}\}$ are even. Hence the only possibility
that (\ref{cobordism formula 2}) holds is that half of the
orientations of such $\mathbb{C}P^{2q}\big|_{P_{i}}$ are canonical
and half are opposite to the canonical orientation, which means the
right-hand side, and therefore the left-hand side of (\ref{cobordism
formula}) are zero. This completes the proof of Theorem
\ref{maintheorem2}.

\begin{remark}
In a recent paper \cite{LL}, we have generalized some results of
\cite{KU} and explored some vanishing results by using the rigidity
of elliptic genus.
\end{remark}
\subsection{Proof of Theorem \ref{maintheorem3}}
In this
 subsection, $(M^{2n},J)$ is an almost-complex manifold with a semi-free circle action with isolated fixed points.
 Let $\rho_{t}$ be the number of fixed points of the circle action with exactly $t$ negative weights. In fact these
 $\rho_{t}$ are all related to each other (\cite{TW}, Lemma 3.1)
 $$\rho_{t}=\rho_{0}\cdot\binom{n}{t},\qquad 0\leq t\leq n.$$
 This fact can also be derived from a rigidity result (cf. \cite{Li}, Theorem
 3.2). This fact means the isolated fixed point set is nonempty
 if and only if $\rho_{0}>0$.

The following lemma shows that the first Chern class of $(M^{2n},J)$
is
nonzero.

\begin{lemma}\label{c_{1}c_{n-1}}
The Chern number $c_{1}c_{n-1}[M]$ is equal to $\rho_{0}\cdot
n\cdot2^{n}$. In particular, if the isolated fixed points set is
nonempty, then $c_{1}(M)$ is nonzero.
\end{lemma}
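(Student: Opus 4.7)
The plan is to evaluate $c_{1}c_{n-1}[M]$ directly via the Bott residue formula (\ref{BRF1}), using that the semi-free hypothesis forces all weights at every fixed point to be $\pm 1$.

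First I would observe that, since the action is semi-free and the fixed points are isolated, every weight $k_{j}^{(i)}$ must lie in $\{+1,-1\}$: indeed, if some $\abs{k_{j}^{(i)}}\geq 2$, then the cyclic subgroup of order $\abs{k_{j}^{(i)}}$ in $S^{1}$ would fix a curve through $P_{i}$ tangent to the corresponding weight space, contradicting semi-freeness. Together with the identity $\rho_{t}=\rho_{0}\binom{n}{t}$ quoted in the text, this gives a complete combinatorial description of the local equivariant data.

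Next I would take
\[
f(x_{1},\ldots,x_{n})=\Bigl(\sum_{j=1}^{n}x_{j}\Bigr)\Bigl(\sum_{j_{1}<\cdots<j_{n-1}}x_{j_{1}}\cdots x_{j_{n-1}}\Bigr),
\]
so that $\widetilde{f}(e_{1},\ldots,e_{n})=e_{1}e_{n-1}$ and the right-hand side of (\ref{BRF1}) becomes exactly $c_{1}c_{n-1}[M]$. At a fixed point $P_{i}$ having $t$ weights equal to $-1$ and $n-t$ equal to $+1$, a short computation gives
\[
\sum_{j}k_{j}^{(i)}=n-2t,\qquad \prod_{j}k_{j}^{(i)}=(-1)^{t},\qquad e_{n-1}\bigl(k_{1}^{(i)},\ldots,k_{n}^{(i)}\bigr)=(n-2t)(-1)^{t},
\]
so the local contribution at $P_{i}$ to the Bott sum is exactly $(n-2t)^{2}$. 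Summing over all fixed points and substituting $\rho_{t}=\rho_{0}\binom{n}{t}$ yields
\[
c_{1}c_{n-1}[M]=\rho_{0}\sum_{t=0}^{n}\binom{n}{t}(n-2t)^{2}.
\]

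To finish I would evaluate the binomial sum by expanding $(n-2t)^{2}=n^{2}-4nt+4t^{2}$ and invoking the standard identities $\sum_{t}\binom{n}{t}=2^{n}$, $\sum_{t}t\binom{n}{t}=n\cdot 2^{n-1}$, $\sum_{t}t^{2}\binom{n}{t}=n(n+1)\cdot 2^{n-2}$, which collapse the sum to $n\cdot 2^{n}$. This gives $c_{1}c_{n-1}[M]=\rho_{0}\cdot n\cdot 2^{n}$, and the final assertion is then immediate: if $\rho_{0}>0$ this Chern number is nonzero, so $c_{1}(M)$ cannot vanish. The only delicate step is the identification of $e_{n-1}$ at a fixed point in terms of $t$; once the $\pm 1$ structure of the weights is extracted from semi-freeness, the remainder is a familiar binomial identity.
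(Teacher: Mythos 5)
Your proposal is correct and follows essentially the same route as the paper: applying the Bott residue formula (\ref{BRF1}) with $f=e_{1}e_{n-1}$, using semi-freeness to force all weights to be $\pm 1$, reducing the local contribution at a fixed point with $t$ negative weights to $(n-2t)^{2}$, and evaluating $\sum_{t}\binom{n}{t}(n-2t)^{2}=n\cdot 2^{n}$ after substituting $\rho_{t}=\rho_{0}\binom{n}{t}$. The only difference is that you spell out the computation of $e_{n-1}$ and the justification that the weights are $\pm 1$, which the paper merely asserts.
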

\begin{proof}
In each fixed point $P_{i}$, the $n$ weights
$k_{1}^{(i)},\cdots,k_{n}^{(i)}$ are all $\pm 1$. If the number of
$-1$ among $k_{1}^{(i)},\cdots,k_{n}^{(i)}$ is $t$, then it is easy
to check
$$\frac{e_{1}(k_{1}^{(i)},\cdots,k_{n}^{(i)})e_{n-1}(k_{1}^{(i)},\cdots,k_{n}^{(i)})}{\prod_{j=1}^{n}k_{j}^{(i)}}=(n-2t)^{2}.$$
By Bott's residue formula (\ref{BRF1}) we
have
$$c_{1}c_{n-1}[M]=\sum_{t=0}^{n}\rho_{t}(n-2t)^{2}=\rho_{0}\sum_{t=0}^{n}\binom{n}{t}(n-2t)^{2}=\rho_{0}\cdot n\cdot 2^{n}.$$
\end{proof}

Now we can prove our last main result, Theorem
\ref{maintheorem3}.
\begin{proof}
Since $c_{1}(M)\neq 0$, we can assume $c_{1}(M)=d\cdot x$, where $d$
is a positive integer and $x\in H^{2}(M;\mathbb{Z})$ is a primitive
element. It suffices to show, if $d>1$, then $d$ must be
$2$.

Using Proposition \ref{rigidity proposition} we have
\be\begin{split} 0&\equiv\sum_{t=0}^{n}\rho_{t}g^{\frac{n-2t}{d}}\frac{(-g)^{t}}{(1-g)^{n}}\\
&=\rho_{0}\frac{g^{\frac{n}{d}}}{(1-g)^{n}}\sum^{n}_{t=0}\binom{n}{t}(-1)^{t}g^{\frac{(d-2)t}{d}}\\
&=\rho_{0}\frac{g^{\frac{n}{d}}}{(1-g)^{n}}(1-g^{\frac{d-2}{d}})^{n}.\end{split}\nonumber\ee
If the isolated fixed point set is nonempty, then $\rho_{0}>0$. In
this case, the last expression is identically zero if and only if
$d=2$.
\end{proof}
\bibliographystyle{amsplain}

\begin{thebibliography}{HK}

\setlength{\itemsep}{0ex}
\bibitem[1]{AB}
{M. Atiyah, R. Bott}:
\newblock {\em The moment map and equivariant cohomology},
\newblock Topology. {\bf 23} (1984), 1-28.

\bibitem[2]{AS}
{M.F. Atiyah, I.M. Singer}:
\newblock {\em The index theory of elliptic operators: III},
\newblock Ann. Math. {\bf 87} (1968), 546-604.

\bibitem[3]{BV}
{N. Berline, M. Vergne }:
\newblock {\em Z\'{e}ros d'un champ de vecteurs et classes charact\'{e}ristiques \'{e}quivariantes},
\newblock Duke Math. J. {\bf 50} (1983), 539-549.


\bibitem[4]{Bo}
{R. Bott}:
\newblock {\em Vector fields and characteristic numbers},
\newblock Michigan Math. J. {\bf 14} (1967), 231-244.

\bibitem[5]{CF}
{P.E. Conner, E.E. Floyd}:
\newblock {\em Differential periodic maps},
\newblock Springer, Berlin, 1964.

\bibitem[6]{Go1}
{E. Gonzalez}:
\newblock {\em Quantum cohomology and $S^{1}$ actions with isolated fixed points },
\newblock Trans. Amer. Math. Soc. {\bf 358} (2006), 2927-2948.

\bibitem[7]{Go2}
{E. Gonzalez}:
\newblock {\em Classifying semi-free Hamiltonian $S^{1}$-manifolds},
\newblock arXiv: math/0502364, to appear in Int. Math. Res. Not.

\bibitem[8]{Ha}
{A. Hattori}:
\newblock {\em Symplectic manifolds with semifree $S^{1}$ actions},
\newblock Tokyo J. Math. {\bf 15} (1992), 281-296.

\bibitem[9]{Hi1}
{F. Hirzebruch}:
\newblock {\em Topological methods in algebraic geometry},
\newblock 3rd Edition, Springer, Berlin (1966).

\bibitem[10]{Hi2}
{F. Hirzebruch}:
\newblock {\em Elliptic genera of level $N$ of complex manifolds},
\newblock Elliptic genera of level $N$ for complex manifolds. Differential geometrical methods in theoretical physics,
Kluwer Acad. Publ., Dordrecht, (1988), 37-63.

\bibitem[11]{HBJ}
{F. Hirzebruch, T. Berger, R. Jung}:
\newblock {\em Manifolds and modular forms},
\newblock Aspects of Mathematics, E20, Friedr. Vieweg and Sohn, Braunschweig, 1992.

\bibitem[12]{KU}
{K. Kawakubo, F. Uchida}:
\newblock {\em On the index of a semi-free $S^{1}$-action},
\newblock J. Math. Soc. Japan. {\bf 23} (1971), 351-355.


\bibitem[13]{Ko}
{C. Kosniowski}:
\newblock {\em Some formulae and conjectures associated with circle actions},
\newblock Topology Symposium, Siegen 1979 (Proc. Sympos., Univ. Siegen, Siegen, 1979), pp. 331-339,
Lecture Notes in Math., 788, Springer, Berlin, 1980.

\bibitem[14]{Li}
{P. Li}:
\newblock {\em The rigidity of Dolbeault-type operators and symplectic circle actions },
\newblock  arXiv: 1007.4665.

\bibitem[15]{LL}
{P. Li, K. Liu}:
\newblock {\em Circle action and some vanishing results on manifolds},
\newblock  arXiv: 1012.1507.

\bibitem[16]{Liu1}
{K. Liu}:
\newblock {\em On elliptic genera and theta-functions},
\newblock Topology. {\bf 35} (1996), 617-640.

\bibitem[17]{Liu2}
{K. Liu}:
\newblock {\em On modular invariance and rigidity theorems},
\newblock J. Differential Geom. {\bf 41} (1995), 343-396.

\bibitem[18]{PW}
{R. Pantilie, J.C. Wood}:
\newblock {\em Topological restrictions for circle actions and harmonic morphisms},
\newblock Manuscripta Math. {\bf 110} (2003), 351-364.

\bibitem[19]{PT}
{A. Pelayo, S. Tolman}:
\newblock {\em Fixed points of symplectic periodic flows},
\newblock arXiv: 1003.4787, to appear in Ergodic Theory and Dynamical Systems.

\bibitem[20]{TW}
{S. Tolman, J. Weitsman}:
\newblock {\em On semifree symplectic circle actions with isolated fixed points},
\newblock Topology. {\bf 39} (2000), 299-309.

\end{thebibliography}
{\bf Acknowledgements.}~This paper was conducted when the first
author visited Center of Mathematical Sciences, Zhejiang University.
The author thanks the Center for its hospitality. The first author
also thanks Zhi L\"{u} for many fruitful discussions on the related
topics of this paper. The authors thank the referee for his/her
careful reading of the earlier version of this paper and many
fruitful comments and suggestions, which improve the quality of this
paper.

\end{document}